\newtheorem{theorem}{Theorem}
\theoremstyle{plain}
\newtheorem{corollary}{Corollary}
\newtheorem{lemma}{Lemma}
\newtheorem{proposition}{Proposition}
\numberwithin{equation}{section}
\begin{document}
\title[Convex and starshaped sets in $\mathcal{W}^{n}$]{Convex and
starshaped sets in manifolds without conjugate points}
\author{Sameh Shenawy}
\address{Basic Science Department, Modern Academy for Engineering and
Technology, Maadi, Egypt.}
\email{drssshenawy@eng.modern-academy.edu.eg, drshenawy@mail.com}
\subjclass[2010]{Primary 52A10, 52A30; Secondary 53B20}
\keywords{ Manifolds without conjugate points, hyperbolic space, convex
sets, stars, starshaped sets, extreme points, kernel.}

\begin{abstract}
Let $\mathcal{W}^{n}$ be the class of $C^{\infty }$ complete simply
connected $n-$dimensional manifolds without conjugate points. The hyperbolic
space as well as Euclidean space are good examples of such manifolds. Let $%
W\in \mathcal{W}^{n}$ and let $A$ be a subset of $W$. This article aims at
characterization and building convex and starshaped sets in this class from
inside. For example, it is proven that, for a compact starshaped set, the
convex kernel is the intersection of stars of extreme points only. Also, if
a closed unbounded convex set $A$ does not contain a totally geodesic
hypersurface and its boundary has no geodesic ray, then $A$ is the convex
hull of its extreme points. This result is a refinement of the well-known
Karein-Millman theorem.
\end{abstract}

\maketitle

\section{An Introduction}

Let $M$ be a $C^{\infty }$ complete Riemannian manifold. A vector field $J$
along a geodesic $\alpha $ is called a Jacobi vector field if 
\begin{equation*}
D_{T}^{2}J+\Re \left( \alpha ^{\prime },J\right) \alpha ^{\prime }=0
\end{equation*}%
$D$ is the covariant derivative and $\Re $ is the curvature tensor. Two
points on a geodesic $\alpha $ are said to be conjugate to each other if
there is a non-trivial Jacobi vector field along $\alpha $ that vanishes at
both of them. A geodesic $\alpha $ has no conjugate points if every Jacobi
field along $\alpha $ vanishes at most once. A $C^{\infty }$ complete
Riemannian manifold $M$ is called a manifold without conjugate points if
every geodesic of $M$ has no conjugate points. In this case, the exponential
map is a covering map at every point of $M$. Moreover, if $M$ is simply
connected, then $\exp _{p}$ is a diffeomorphism and $M$ has the property
that for every two distinct points $p$ and $q$ in $M$, there is a unique
geodesic joining them. Let $\mathcal{W}^{n}$ be the class of $C^{\infty }$
complete simply connected $n-$dimensional Riemannian manifolds without
conjugate points. The hyperbolic space $H^{n}$, the $n-$dimensional
Euclidean space $E^{n}$ and all manifolds with non-positive curvature are
good examples of such manifolds. We refer to \cite{Burns:1992,
Eberlein:1972, Emmerich:2013, Green:1954, Goto:1978,
Gulliver:1975,Ivanov:2014,Morse:1942, Ruggiero:2008, Shenawy:2011,
Sullivan:1974} and references therein for more details and examples of these
manifolds.

It is very nice to study the boundary of a closed set $A$ in $W\in \mathcal{W%
}^{n}$ and get a global properties of $A$. For instance, the Krein-Milman
theorem \cite{Lay:1982, Li:2006, Balashov:2002, GARCIA:1985, Oates:1971,
Valentine:1964} in the $n-$dimensional Euclidean space $E^{n}$ asserts that
every compact convex set is the convex hull of its extreme points i.e. given
a compact convex set $A\subset E^{n}$, one only need its extreme points $%
E\left( A\right) $ to recover the set shape.

The aim of this paper is to characterize convex and starshaped sets in
manifolds without conjugate points using their extreme points. Sufficient
conditions for a set $A$ in $W\in \mathcal{W}^{n}$ to be convex, totally
geodesic, and starshaped are considered. A generalization of Krein-Milman
theorem to the setting of closed unbounded convex sets is given. It is clear
that the convex kernel of a starshaped set $A\subset W$, $W\in \mathcal{W}%
^{2}$, is the intersection of stars of all points of $A$. In this work, it
is proven that, for a compact starshaped set, the convex kernel is the
intersection of stars of extreme points only. Moreover, the original
starshaped condition is replaced by a more general condition where the
intersection of the stars of certain extreme points is not empty. Thus we
get a characterization of starshaped sets in $\mathcal{W}^{2}$.

\section{Results}

Let $W\in \mathcal{W}^{n}$ and let $A$ be a non-empty subset of $W$. The
geodesic segment joining two points $p$ and $q$ is denoted by $\left[ pq%
\right] $. If $p$ is removed we write $\left( pq\right] $. The geodesic ray
with vertex at $p$ and passing through $q$ is denoted by $R\left( pq\right) $
while the geodesic passing through $p$ and $q$ is denoted by $G\left(
pq\right) $. We say that $p$ sees $q$ via $A$ if $\left[ pq\right] \subset A$%
. The set of all points of $A$ that $p$ sees via $A$ is called the star of $%
A $ at $p$ and is denoted by $A_{p}$. $A$ is a starshaped set if there is a
point $p\in A$ that sees every point in $A$ i.e. $A_{p}=A$. The set of all
such points $p$ is called the kernel of $A$ and is denoted by $\ker A$. $A$
is convex if $\ker A=A$. A point $p\in A$ is called an extreme point of $A$
if $p$ is not a relative interior point of any segment in $A$. The set of
all extreme points of $A$ is called the profile of $A$ and is denoted by $%
E\left( A\right) $. Note that the definition of extreme points is introduced
here to a non-convex set so it is somewhat different form the classical one.
The convex hull, $C\left( A\right) $, of $A$ is the intersection of all
convex subsets of $E^{n}$ that contain $A$. Three concepts of convex sets
were introduced to complete Riemannian manifolds in \cite{Alexander:1978}.
The three concepts coincide in complete simply connected Riemannian
manifolds without conjugate points since geodesics of these manifolds are
global minimizers.\cite{Beltagy:1993, BeltagyB:1994, BeltagyB:1988,
Jaume:2005,Shenawy:2011}.

We begin with the following important lemmas.

\begin{lemma}
\label{L1}Let $W\in \mathcal{W}^{n}$ and let $A$ be a closed subset of $W$.
If $a$ and $b$ are points of $A$ and $\left[ ab\right] \nsubseteq A$, then
there are two points $x,y\in \partial A\cap \left[ ab\right] $ such that $%
\left( xy\right) \cap A=\varphi $.
\end{lemma}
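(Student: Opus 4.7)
The plan is to parametrize the segment $[ab]$ by a unit-speed geodesic $\gamma:[0,L]\to W$ with $\gamma(0)=a$ and $\gamma(L)=b$, pass the hypothesis $[ab]\nsubseteq A$ to a witness parameter $t_0\in(0,L)$ with $\gamma(t_0)\notin A$, and then obtain the desired boundary points $x$ and $y$ as the last point of $A$ encountered moving from $a$ toward $\gamma(t_0)$ and the first point of $A$ encountered moving from $\gamma(t_0)$ toward $b$.

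Concretely, I would set
\[
t_x:=\sup\{t\in[0,t_0]:\gamma(t)\in A\},\qquad t_y:=\inf\{t\in[t_0,L]:\gamma(t)\in A\},
\]
and let $x:=\gamma(t_x)$, $y:=\gamma(t_y)$. Both sets are nonempty ($t=0$ and $t=L$ respectively), and because $A$ is closed in $W$ and $\gamma$ is continuous, $\gamma^{-1}(A)\cap[0,t_0]$ and $\gamma^{-1}(A)\cap[t_0,L]$ are closed; hence the supremum and infimum are attained, giving $x,y\in A\cap[ab]$. Since $\gamma(t_0)\notin A$, strictness $t_x<t_0<t_y$ follows automatically.

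Next, I would check that $x,y\in\partial A$. By the definition of $t_x$ as a supremum, the points $\gamma(t)$ for $t\in(t_x,t_0]$ all lie in $W\setminus A$, so every neighborhood of $x$ meets $W\setminus A$, and combined with $x\in A$ this places $x$ on $\partial A$; the argument for $y$ is symmetric using the infimum. Finally, for any $t\in(t_x,t_y)$ one has $\gamma(t)\notin A$: if $t\le t_0$ this is the supremum property, and if $t\ge t_0$ it is the infimum property. Hence $(xy)\cap A=\varphi$, which is exactly the claim.

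There is essentially no serious obstacle here: the statement is a purely topological fact about a continuous curve meeting a closed set, and the manifold-without-conjugate-points structure enters only through the existence of the unique geodesic segment $[ab]$ recalled in the preliminaries. The one place to be careful is the verification $t_x<t_y$, which needs $\gamma(t_0)\notin A$ to separate the two extrema; this is why one first selects an explicit $t_0$ with $\gamma(t_0)\notin A$ rather than working with a single supremum over all of $[0,L]$.
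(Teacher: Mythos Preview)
Your proof is correct. The paper actually states Lemma~\ref{L1} without proof, so there is no argument in the paper to compare against; your parametrize-and-take-sup/inf approach is the standard way to establish this elementary topological fact, and it goes through exactly as you describe. The only cosmetic remark is that your observation about the role of the manifold structure is apt: uniqueness of geodesics in $W\in\mathcal{W}^n$ is what makes $[ab]$ a well-defined arc to parametrize, but the rest is pure point-set topology on $[0,L]$.
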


\begin{lemma}
\label{L2}Let $W\in \mathcal{W}^{n}$ and let $A$ be a compact subset of $W$.
Then $A$ has at least one extreme point.
\end{lemma}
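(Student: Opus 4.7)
The natural approach is a farthest-point argument. Since every $W\in\mathcal{W}^{n}$ has $\exp_{p_{0}}\colon T_{p_{0}}W\to W$ a diffeomorphism for each $p_{0}\in W$, the manifold $W$ is diffeomorphic to $\mathbb{R}^{n}$ and hence non-compact; so $A$ is a proper compact subset of $W$ and I can fix some $p_{0}\in W\setminus A$. The function $f(x)=d(p_{0},x)$ is continuous on the compact set $A$ and therefore attains its maximum at some point $q\in A$. I would claim that this $q$ is an extreme point of $A$.

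I would prove the claim by contradiction. Suppose $q$ is a relative interior point of a segment $[ab]\subseteq A$, and let $\gamma\colon[0,L]\to W$ be the unique geodesic joining $a$ and $b$, with $\gamma(t_{0})=q$ and $0<t_{0}<L$. Because $p_{0}\notin A$ while $[ab]\subseteq A$, the geodesic $\gamma$ does not pass through $p_{0}$, so the function $h(t)=d(p_{0},\gamma(t))$ is smooth on $[0,L]$ and attains a global maximum at the interior point $t_{0}$. Pulling back through the radial isometry $\exp_{p_{0}}^{-1}$, one may rewrite $h(t)=\|\exp_{p_{0}}^{-1}(\gamma(t))\|$, so the question reduces to whether the Euclidean norm of a smooth curve in $T_{p_{0}}W$ can have a strict interior maximum along the pull-back of a $W$-geodesic missing $p_{0}$.

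This last step is the main obstacle. In non-positive curvature, or more generally in the absence of focal points, $d(p_{0},\cdot)^{2}$ is strictly convex along any geodesic missing $p_{0}$, which immediately forbids an interior maximum; what I need here is an analogous obstruction valid in the broader no-conjugate-points setting, extracted from the global-minimization property of geodesics in $\mathcal{W}^{n}$ recorded in the preliminaries together with convexity of geodesic balls in this class. Once $h(t_{0})<\max\{h(0),h(L)\}$ is secured, the inequality $d(p_{0},q)<\max\{d(p_{0},a),d(p_{0},b)\}$ contradicts the choice of $q$ as a farthest point of $A$ from $p_{0}$, forcing $q$ to be extreme and proving the lemma. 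As a fallback, if the convexity of distance is not directly available, one can transfer the entire problem to $\tilde{A}=\exp_{p_{0}}^{-1}(A)\subset T_{p_{0}}W\cong\mathbb{R}^{n}$, select a Euclidean extreme point of the compact set $\tilde{A}$, and then translate extremality back through $\exp_{p_{0}}$ by analyzing how $W$-geodesics through the candidate point sit relative to the Euclidean structure at $p_{0}$.
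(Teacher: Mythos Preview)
Your farthest-point strategy is exactly the paper's: choose $p_{0}\in W\setminus A$, let $q\in A$ maximize $d(p_{0},\cdot)$, and claim that $q$ is extreme because any geodesic segment containing $q$ in its relative interior must leave the closed ball $\overline{B}(p_{0},r)$ (with $r=d(p_{0},q)$) and hence leave $A$. The paper dispatches this last implication in a single sentence; you have correctly isolated it as the only substantive step.

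What remains a genuine gap in your write-up is that neither of your two suggested routes actually closes that step. The desired inequality $h(t_{0})<\max\{h(0),h(L)\}$ amounts to saying that $q$ is an extreme point of the geodesic ball $\overline{B}(p_{0},r)$; this follows from convexity of the ball, but convexity of geodesic balls (equivalently, convexity of $d(p_{0},\cdot)$ along geodesics) characterizes manifolds \emph{without focal points}, a strictly stronger hypothesis than mere absence of conjugate points---a distinction the paper's own introduction flags. So invoking ``convexity of geodesic balls in this class'' begs exactly the question at issue. Your fallback through $\exp_{p_{0}}^{-1}$ does not work as stated either: a Euclidean extreme point $\tilde{q}$ of $\tilde{A}$ need not give an extreme point of $A$, because a $W$-geodesic segment $[ab]\subset A$ with $q=\exp_{p_{0}}(\tilde{q})$ in its relative interior pulls back to a \emph{curve} in $T_{p_{0}}W$, not a straight segment, and nothing prevents a Euclidean extreme point of $\tilde{A}$ from lying in the relative interior of such a curved arc inside $\tilde{A}$. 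In short, you have reproduced the paper's outline and correctly located the crux; the paper simply asserts that crux, while you leave it explicitly open without a working substitute.
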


\begin{proof}
Let $p$ be in $W\backslash A$. Define the real-valued continuous function $f$
on $A$ by $f\left( x\right) =d\left( p,x\right) ,x\in A$. Since $A$ is
compact, $f$ attains its maximum value at a point $y\in A$. Thus $A$ is a
subset of the closed disc $\overline{B}\left( p,r\right) $ with centre at $p$
and radius $r=d\left( p,y\right) $ defined by%
\begin{equation*}
\overline{B}\left( p,r\right) =\left\{ x\in W:d\left( p,x\right) \leq
r\right\}
\end{equation*}%
The point $y$ is an extreme point of $A$ since any geodesic segment
containing $p$ in its relative interior cuts the exterior of $A$.
\end{proof}

\begin{theorem}
Let $W\in \mathcal{W}^{2}$ and let $A$ be a compact starshaped subset of $W$%
. Then%
\begin{equation*}
\ker A=\bigcap\limits_{x\in E\left( A\right) }A_{x}
\end{equation*}
\end{theorem}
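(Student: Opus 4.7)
The containment $\ker A \subseteq \bigcap_{x \in E(A)} A_{x}$ is immediate: if $p \in \ker A$ then $[px] \subset A$ for every $x \in A$, in particular for every extreme point $x$, so $p \in A_{x}$.

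For the reverse containment I would argue by contradiction. Suppose $p \in \bigcap_{x \in E(A)} A_{x}$ but $p \notin \ker A$, so there is some $q \in A$ with $[pq] \nsubseteq A$; the aim is to exhibit an extreme point $e \in E(A)$ with $[pe] \nsubseteq A$, contradicting $p \in A_{e}$. First I would verify that $A_{p}$ is closed in $A$: if $a_{n} \to a$ in $A$ with $[pa_{n}] \subset A$, then $[pa_{n}] \to [pa]$ as compact sets (by continuity of $\exp_{p}$), so closedness of $A$ gives $[pa] \subset A$. Hence $B := A \setminus A_{p}$ is open in $A$ and $\overline{B}$ is compact. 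Applying Lemma~\ref{L1} to $[pq]$ produces $x_{0}, y_{0} \in \partial A \cap [pq]$ with $(x_{0} y_{0}) \cap A = \emptyset$; taking $x_{0}$ between $p$ and $y_{0}$, the point $y_{0}$ lies in $B$, so $B \neq \emptyset$. By compactness choose $a^{\ast} \in \overline{B}$ maximizing $d(p, \cdot)$.

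The heart of the argument is to show $a^{\ast}$ is an extreme point of $A$ that lies in $B$. Assuming $a^{\ast}$ is not extreme, it sits in the relative interior of some segment $[uv] \subset A$. Here the 2-dimensional hypothesis enters via a ``triangle-filling'' principle: a compact starshaped subset of $W \in \mathcal{W}^{2}$ is contractible and hence simply connected (the deformation retraction onto a kernel point $k$ given by sliding each $a$ toward $k$ along $[ka]$ is continuous because $\exp_{k}$ is a diffeomorphism); and since $W \cong \mathbb{R}^{2}$, if the boundary of the geodesic triangle $\Delta(p,u,v)$ lies in $A$ but its interior does not, then the complement $W \setminus A$ acquires a bounded connected component inside $\Delta$, contradicting simple connectedness of $A$. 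Thus, once the three sides $[pu]$, $[pv]$, $[uv]$ all lie in $A$, so does the closed triangle, and hence $[pa^{\ast}] \subset A$, contradicting $a^{\ast} \in B$.

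The main obstacle will be forcing the triangle endpoints $u, v$ into $A_{p}$. The classical Euclidean argument uses convexity of $d(p, \cdot)$ along $[uv]$ to conclude its maximum on the segment is attained at an endpoint, but this convexity requires nonpositive curvature and is not available in $\mathcal{W}^{2}$. My plan is to select $[uv]$ as a \emph{maximal} segment through $a^{\ast}$ in $A$ along its carrier geodesic, and then to combine the maximality of $d(p, a^{\ast})$ over $\overline{B}$ with the uniqueness and global-minimizer property of geodesics in $W$ to push $u, v$ into $A_{p}$. Degenerate configurations (e.g., $p, u, v$ collinear, so $\Delta$ degenerates to a segment) admit a direct separate treatment.
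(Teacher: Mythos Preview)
Your approach diverges from the paper's and, as written, has a genuine gap exactly where you flag the ``main obstacle''. The paper never tries to locate an extreme point by maximizing $d(p,\cdot)$ over $\overline{A\setminus A_p}$. Instead it fixes a kernel point $z\in\ker A$ (available because $A$ is assumed starshaped) and uses its global visibility throughout. From the gap $(\bar x\,\bar y)\subset[xy]\setminus A$ it pushes along the ray $R(z\bar y)$ to its last point $q$ in $A$, rules out $q\neq\bar y$ and the extremeness of $\bar y$ by visibility arguments from $z$, and then intersects $A$ with the two closed half-planes determined by $G(x\bar y)$ and $G(z\bar y)$ to obtain a compact region $D$. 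Lemma~\ref{L2} supplies an extreme point of $D$, and because $\partial D$ consists of pieces of $\partial A$ together with arcs of two geodesics, that point is argued to be an extreme point of $A$; since $x$ must then see it and $z$ sees the resulting segment, one forces $[x\bar y]\subset A$, the desired contradiction. The kernel point $z$ is the engine of the whole argument; you never invoke one, and starshapedness enters your plan only through simple connectedness.

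Your triangle-filling step is fine, but it is conditional on $u,v\in A_p$, and the plan to force this does not go through in $\mathcal{W}^{2}$. Without nonpositive curvature (indeed, without absence of focal points) the function $d(p,\cdot)$ need not be convex along geodesics, so an interior point of a geodesic segment can be strictly farther from $p$ than both endpoints; hence the maximizer $a^{\ast}$ of $d(p,\cdot)$ over $\overline{B}$ can perfectly well sit in the relative interior of a segment $[uv]\subset A$ whose endpoints lie in $B$ at strictly smaller distance. Choosing $[uv]$ maximal in $A$ only yields $u,v\in\partial A$; it does not put them into $A_p$, and I do not see how ``uniqueness and global-minimizing'' of geodesics bridges this. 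There is a second, linked issue: even if you established that $a^{\ast}$ is extreme, the hypothesis $p\in\bigcap_{e\in E(A)}A_e$ gives $a^{\ast}\in A_p$, i.e.\ $a^{\ast}\notin B$; but you only selected $a^{\ast}\in\overline{B}$ and give no reason why the maximizer avoids $\overline{B}\setminus B$, so the intended contradiction does not close. To salvage your route you would need a genuine substitute for convexity of $d(p,\cdot)$ valid in $\mathcal{W}^{2}$; absent that, importing a kernel point as the paper does seems unavoidable.
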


\begin{proof}
Let $B=\bigcap\limits_{x\in E\left( A\right) }A_{x}$. By the definition of
the kernel of a starshaped set we have%
\begin{equation*}
\ker A=\bigcap\limits_{x\in A}A_{x}\subset \bigcap\limits_{x\in E\left(
A\right) }A_{x}=B
\end{equation*}%
So, we need only to show that $B\subset A$. Let $x\in B\backslash \ker A$.
Then there is a point $y\in A$ such that $\left[ xy\right] \nsubseteq A$. By
Lemma \ref{L1}, we find two points $\overline{x}$, $\overline{y}$ in $%
\partial A\cap \left[ xy\right] $ such that $\left( \overline{x}\text{ }%
\overline{y}\right) \cap A=\phi $. Let $z\in \ker A,$ then $z$ sees $%
\overline{y}$ via $A$ and hence $R\left( z\overline{y}\right) \cap A$ is a
closed geodesic segment. Let $q\in \partial A$ such that $R\left( z\overline{%
y}\right) \cap A=\left[ zq\right] $. Suppose that $q\neq \overline{y}$.
Since $x\in E\left( A\right) $, $q$ sees $x$ via $A$. Then $z$ sees the
geodesic segment $\left[ xq\right] $ via $A$ and consequently $z$ sees $%
\left[ x\overline{y}\right] $ via $A$ which is a contradiction and $q$ is
not an extreme point i.e. there is a geodesic segment $\left[ ab\right]
\subset A$ such that $p\in \left( ab\right) $. It is clear from the choice
of $q$ that $\left( ab\right) \nsubseteq R\left( z\overline{y}\right) $.
Since $z\in \ker A$, $z$ sees $\left( ab\right) $ via $A$. Thus we get two
points $\overline{a}=\left[ za\right] \cap \left[ xy\right] $ and $\overline{%
b}=\left[ zb\right] \cap \left[ xy\right] $ such that $\overline{y}\in \left[
\overline{a}\overline{b}\right] $ which contradicts the choice of $\overline{%
y}$. So, $q=\overline{y}$. $\overline{y}$ is not an extreme point otherwise $%
\overline{y}$ sees $x$. Therefore, we get a geodesic segment $\left[ rs%
\right] $ such that $\overline{y}\in \left( rs\right) \subset A$. The
geodesic $G\left( rs\right) $ separates the points $x$ and $z$ otherwise, as
we do above, $z$ sees $\left( rs\right) $ via $A$ and we get a point $%
\overline{r}=\left[ zr\right] \cap \left[ xy\right] \in A$ that contradicts
the choice of $\overline{y}$. Let $H_{1}$ be the closed half space generated
by $G\left( x\overline{y}\right) $ that does not contain $z$ and let $H_{2}$
be the half space generated by $G\left( z\overline{y}\right) $ that does not
contain $x$. Let $D=A\cap H_{1}\cap H_{2}$. $D$ has a non-empty intersection
with the geodesic segment $\left( rs\right) $ i.e. $D$ has points close to $%
\overline{y}$. Since $D$ is compact, $D$ has an extreme point $p\in \partial
D$ by Lemma \ref{L2}. The boundary points of $D$ are either boundary points
of $A$ or points of $G\left( x\overline{y}\right) $. Thus $p$ is an extreme
point of $A$ i.e. $p$ sees $x$ via $A$. Since $z\in \ker A,$ $z$ sees the
geodesic segment $\left[ px\right] $ via $A$ and consequently $\left[ x%
\overline{y}\right] \subset A$ which is a contradiction and the point $x$
does not exist.
\end{proof}

\begin{theorem}
Let $W\in \mathcal{W}^{2}$ and let $A$ be a compact subset of $W$. Suppose
that $B=\bigcap\limits_{x\in E\left( A\right) }A_{x}\neq \varphi $. Then $%
\ker A=B$ if and only if for every $x\notin A$, there is a geodesic ray with
vertex at $x$ having a non-empty intersection with $A$.
\end{theorem}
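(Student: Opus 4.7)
The forward implication is a one-line check: if $\ker A=B$ and $B\neq \varphi$, pick any $p\in \ker A\subset A$; then for $x\notin A$ the ray $R(xp)$ has vertex $x$ and passes through $p\in A$, giving $R(xp)\cap A\ni p$.

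For the reverse implication, assume the ray hypothesis and aim at $\ker A=B$. The inclusion $\ker A\subset B$ is automatic: any kernel point sees all of $A$, hence every extreme point. The nontrivial content is $B\subset \ker A$, which I propose to prove by contradiction in the style of Theorem 1. Fix $p\in B$ and suppose some $y\in A$ admits $[py]\not\subset A$. Lemma \ref{L1} produces $\bar p,\bar y\in \partial A\cap [py]$, with $\bar p$ between $p$ and $\bar y$ and $(\bar p\bar y)\cap A=\varphi$. The point $\bar y$ cannot be extreme in $A$, since $p\in B\subset A_{\bar y}$ would then force $[p\bar y]\subset A$, contradicting the empty gap; hence $\bar y$ lies in the relative interior of a segment $[rs]\subset A$ which, by the same emptiness, must cross $G(py)$ transversally at $\bar y$.

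The next move is to replace the kernel point $z$ used in Theorem 1's argument. Since $\ker A$ is not yet known to be non-empty, I would instead use the ray hypothesis applied to some $x_0\in (\bar p\bar y)\subset W\setminus A$: it supplies a ray from $x_0$ meeting $A$, and I let $c$ be the first point of $A$ on this ray, so that $c\in \partial A$ and the open segment from $x_0$ to $c$ lies in $W\setminus A$. Letting $H_1$ be the closed half-plane bounded by $G(py)$ opposite $c$, and $H_2$ the closed half-plane bounded by $G(c\bar y)$ opposite $p$, the compact set $D=A\cap H_1\cap H_2$ is non-empty because it contains points of $(rs)$ on the appropriate side of $\bar y$. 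Lemma \ref{L2} produces an extreme point $q\in \partial D$, and a case analysis of the pieces of $\partial D$ (boundary of $A$ versus boundary of the half-planes) shows $q\in E(A)$. Then $p\in B\subset A_q$ forces $[pq]\subset A$, while the half-plane configuration forces $[pq]$ to cross the forbidden open gap $(\bar p\bar y)$, yielding the desired contradiction.

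The principal obstacle is precisely this substitution: unlike $z\in \ker A$ in Theorem 1, the auxiliary point $c$ does not automatically see every point of $A$, so the half-plane construction and the case analysis of $\partial D$ must be carried out more delicately. I expect that a careful choice of $x_0$ close to $\bar y$, combined with the local transverse structure of $[rs]$ at $\bar y$ and the simple connectedness of $W$, will ensure that every boundary point of $D$ lying on neither $G(py)$ nor $G(c\bar y)$ is a genuine extreme point of $A$, so that $q\in E(A)$ is secured and the contradiction goes through.
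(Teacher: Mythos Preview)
Your forward implication is correct and in fact cleaner than the paper's (which, as written, considers the ray from $q\in\ker A$ through $p$ and beyond, rather than simply $R(pq)$).

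For the reverse implication your route diverges from the paper's. You try to reproduce the half-plane machinery of Theorem~1, replacing the kernel point $z$ by a first-hit point $c$ obtained from the ray hypothesis. You correctly flag that $c$ need not see all of $A$, but there is a second, more serious problem you do not address: even granting $q\in E(A)$, your half-plane configuration does not force $[pq]$ to meet the gap $(\bar p\,\bar y)$. Indeed $p$ lies on $G(py)=\partial H_1$ and $q\in H_1$, so $[pq]\subset H_1$ and $[pq]\cap G(py)=\{p\}$; since the gap sits on $G(py)$, the segment $[pq]$ avoids it entirely. In Theorem~1 this step succeeds only because one then invokes $z\in\ker A$ a second time to fill in the geodesic triangle $\triangle zpx$ and thus cover $[x\bar y]$; you have no substitute for that filling step.

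The paper sidesteps both difficulties with a different device. From a point $p$ in the gap it takes the ray supplied by the hypothesis and \emph{rotates} it about $p$ (toward the $z$-side) until it last touches $A$; at that boundary direction the farthest point of contact is an extreme point $x\in E(A)$, by a direct tangency argument rather than via Lemma~\ref{L2} on a half-plane region. Because the tangent ray separates $y$ from $x$ through the gap region, the segment $[yx]\subset A$ (forced by $y\in B$ and $x\in E(A)$) is then argued to meet the forbidden zone, giving the contradiction. The rotation-to-tangency idea is the missing ingredient in your proposal: it produces the extreme point on the correct side of the gap without needing a kernel point to fill triangles.
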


\begin{proof}
Suppose that $\ker A\neq B$ i.e. $B\nsubseteq \ker A$. Let $y\in B\backslash
\ker A$. Thus there is a point $z\in A$ such that $\left[ yz\right]
\nsubseteq A$. Then by Lemma \ref{L1}, there are two pints $\overline{y}$
and $\overline{z}$ in $\partial A\cap \left[ yz\right] $ such that $\left( 
\overline{y}\overline{z}\right) \cap A=\phi $.\ Let $p\in \left( \overline{y}%
\overline{z}\right) $, then we get a point $\overline{p}\notin A$ such that
the geodesic ray $R\left( p\overline{p}\right) $ has a non-empty
intersection with $A$. Rotate the ray $R\left( p\overline{p}\right) $ to
touch $\partial A$ such that $p$ is fixed and the angle between $\left[ p%
\overline{p}\right] $ and $\left[ pz\right] $ decreases. The intersection of
the new geodesic ray and $A$ has an extreme point $x$ of $A$. Thus $y$ sees $%
x$ via $A$ and $\left[ xy\right] $ cuts the geodesic ray $R\left( p\overline{%
p}\right) $ in a point $a$ which is a contradiction otherwise $a\in \left[ yx%
\right] $ which is also a contradiction. Thus $\ker A=B.$

To prove the second implication, let $p\notin A$ and $q\in \ker A$. Consider
the geodesic ray $R\left( qp\right) $ passing through $p$. The geodesic ray $%
R\left( qp\right) \backslash \left[ qp\right) $ has a non-empty intersection
with $A$ otherwise $q\notin \ker A$.
\end{proof}

\begin{corollary}
Let $W\in \mathcal{W}^{2}$ and let $A$ be a compact subset of $W$. Then $A$
is starshaped if and only if $\bigcap\limits_{x\in E\left( A\right)
}A_{x}\neq \phi $ and for every $x\notin A$, there is a geodesic ray with
vertex at $x$ having a non-empty intersection with $A$. Moreover, $\ker
A=\bigcap\limits_{x\in E\left( A\right) }A_{x}$.
\end{corollary}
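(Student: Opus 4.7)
\medskip
\noindent\textbf{Proof plan.}
The plan is to deduce this corollary directly from the two preceding theorems; all the substantial geometric work is already done there. The biconditional splits into a forward implication and a reverse implication, and the ``moreover'' clause will be picked up along the way.

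For the forward implication, I would assume $A$ is starshaped, so $\ker A \neq \varphi$. The first theorem of this section then gives $\ker A = \bigcap_{x \in E(A)} A_{x}$, and in particular this intersection is non-empty, which is the first required condition. To verify the ray condition, I pick any $q \in \ker A$ and any $x \notin A$: the geodesic ray $R\left(xq\right)$ has vertex at $x$ and contains $q \in A$, so $R\left(xq\right) \cap A \neq \varphi$ as required.

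For the reverse implication, I would assume that $B = \bigcap_{x \in E(A)} A_{x} \neq \varphi$ and that the ray condition holds for every $x \notin A$. These are exactly the hypotheses of the second theorem above, which therefore applies and yields $\ker A = B$. Since $B$ is non-empty, so is $\ker A$, i.e.\ $A$ is starshaped. The ``moreover'' clause is precisely this identity $\ker A = B$, which is now valid in both directions.

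The only step requiring care is bookkeeping: verifying that the hypotheses of each theorem are in place at the moment of invocation (starshapedness of $A$ for the first theorem in the forward direction; nonemptiness of $B$ together with the ray condition for the second theorem in the reverse direction). I do not expect a genuine obstacle here, since the geometric content sits entirely in the two preceding theorems and in Lemmas \ref{L1} and \ref{L2}; the corollary is essentially a repackaging of those results.
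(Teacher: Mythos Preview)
Your proposal is correct and matches the paper's intended approach: the corollary is stated there without a separate proof, as an immediate combination of the two preceding theorems, which is precisely what you do. The only cosmetic difference is that for the ray condition in the forward direction you argue directly (the ray $R(xq)$ contains $q\in A$) instead of citing the ``only if'' half of the second theorem; both are equally valid.
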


\begin{theorem}
\label{th a}Let $W\in \mathcal{W}^{n}$ and let $A$ be a non-empty closed
subset of $W$. If $\partial A$ is convex, then $A$ is a convex set.
Moreover, if $A$ has a non-empty interior, then $A$ is unbounded, $\partial
A $ is totally geodesic and $A^{c}$ is also convex.
\end{theorem}

\begin{proof}
Suppose that $A$ is not convex i.e. we get two points $p$ and $q$ in $A$
such that $\left( pq\right) $ is not contained in $A$. Since $A$ is closed,
we find two points $r,s$ in $\partial A$ such that $\left( rs\right) \cap
A=\phi $ which is a contradiction and so $A$ is convex.

To show that $A$ is unbounded, let $p\in int\left( A\right) $. Suppose that $%
A$ is bounded and so we find a real number $\varepsilon $ such that $A$ is
contained in the closed ball $\overline{B}\left( p,\varepsilon \right) $ of
radius $\varepsilon $ and center at $p$. Let $\left[ ab\right] $ be any
chord of $\overline{B}\left( p,\varepsilon \right) $ that runs through $p$.
Since $A$ and $\left[ ab\right] $ are both closed and convex sets, we find $%
a^{\prime }$ and $b^{\prime }$ in $\partial A$ such that $A\cap \left[ ab%
\right] =\left[ a^{\prime }b^{\prime }\right] $ which is a contradiction
since $\left[ a^{\prime }b^{\prime }\right] $ cuts the interior of $A$.
Therefore $A$ is unbounded.

Assume that $\partial A$ is not totally geodesic i.e. there are two points $%
a $ and $b$ in $\partial A$ such that the line $G\left( ab\right) $ passing
through $a$ and $b$ is not contained in $\partial A.$ Since $\partial A$ and 
$G\left( ab\right) $ are closed convex sets, there are $a^{\prime }$ and $%
b^{\prime }$ in $\partial A$ such that 
\begin{equation*}
\left[ ab\right] \subset \partial A\cap G\left( ab\right) =\left[ a^{\prime
}b^{\prime }\right]
\end{equation*}

Let $p\in G\left( ab\right) \backslash \left[ a^{\prime }b^{\prime }\right] $
(i.e. $p\in int\left( A\right) \cap G\left( ab\right) $ or $p\in A^{c}\cap
G\left( ab\right) $) and assume that $p\in R\left( b^{\prime }a^{\prime
}\right) $. If $p\in int(A)\cap G\left( ab\right) $, then the geodesic
convex cone $C\left( b,\overline{B}\left( p,\varepsilon \right) \right) $
with vertex $b$ and base $\overline{B}\left( p,\varepsilon \right) $ for a
sufficiently small $\varepsilon $ shows \ that $a$ is an interior point
which is a contradiction see Figure \ref{07-01}.

\begin{figure}[h]
\begin{center}
\includegraphics{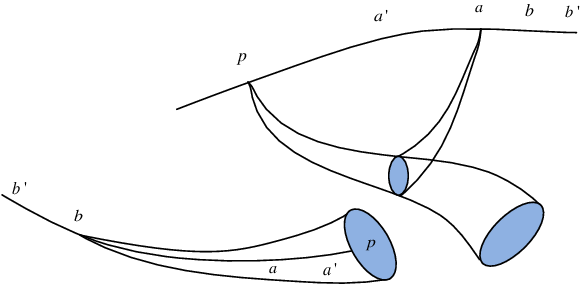}
\end{center}
\caption{Two cases for the point $p$}
\label{07-01}
\end{figure}

Now we take $p\in A^{c}\cap G\left( ab\right) $. Let $q$ be a point of $%
int\left( A\right) $. The sets $\left[ pq\right] $ and $A$ are closed convex
sets and so there is a point $q^{\prime }\in \partial A$ such that $\left[ pq%
\right] \cap A=\left[ q^{\prime }q\right] $. This implies that the
intersection $B=\partial A\cap C\left( p,\overline{B}\left( q,\epsilon
\right) \right) $, for a small $\epsilon $, is a non-empty closed convex set
since $\partial A$ is convex. Therefore, $B$ is a convex cross section of $%
C\left( p,\overline{B}\left( q,\epsilon \right) \right) $ that determines a
hypersurface $H$ whose intersection with $C\left( p,\overline{B}\left(
q,\epsilon \right) \right) $ is $B$. At least one of the points $a$ and $b$%
(say $a$) does not lie in $H$ otherwise the line $G\left( ab\right) $ lies
in $H$ which contradicts the fact that $p$ is the vertex of the convex cone $%
C\left( p,\overline{B}\left( q,\epsilon \right) \right) $. Now, the convex
cone $C\left( a,B\right) $ has dimension $n$ i.e. $C\left( a,B\right) $ has
interior points which is a contradiction since both $a$ and $B$ are in $%
\partial A$ see Figure \ref{07-01}. This contradiction completes the proof.
\end{proof}

\begin{corollary}
Let $W\in \mathcal{W}^{n}$ and let $A$ be a non-empty open subset of $W$ and 
$int(\overline{A})=A$. If $\partial A$ is convex, then $A$ is unbounded
convex set and $\partial A$ is totally geodesic.
\end{corollary}

\begin{proof}
It is clear that $\overline{A}$ satisfies the hypothesis of Theorem \ref{th
a}. Therefore $\partial \overline{A}=\partial A$ is affine and $\overline{A}$
is convex and unbounded. Note that if $A$ is bounded, then $\overline{A}$ is
also bounded and equivalently, $\overline{A}$ is unbounded implies that $A$
is unbounded. Since the interior of a closed convex set is also convex, the
convexity of $\overline{A}$ implies that $A$ is convex.
\end{proof}

\begin{theorem}
Let $W\in \mathcal{W}^{n}$ and let $A$ be a non-empty closed subset of $W.$
If $\left( pq\right) \subset int\left( A\right) $ for every pair of boundary
points $p,q$ of $A$, then $A$ is strictly convex.
\end{theorem}

\begin{proof}
It is enough to prove that $A$ is convex since the strict convexity of $A$
is direct. Now, we assume that $A$ is not convex i.e. there are $p,q$ in $A$
such that $\left[ pq\right] $ is not contained in $A.$ Since $A$ is closed,
there are $p^{\prime },q^{\prime }$ in $\partial A$ such that $\left(
p^{\prime }q^{\prime }\right) \cap A=\phi $ which is a contradiction and $A$
is convex.
\end{proof}

\begin{corollary}
Let $W\in \mathcal{W}^{n}$ and let $A$ be a non-empty closed subset of $W$. $%
A$ is convex if and only if $\left( pq\right) \subset \partial A$ or $\left(
pq\right) \subset int\left( A\right) $ for each pair of boundary points $p,q$%
.
\end{corollary}

Since the interior of a closed convex set is again convex, this result is
still true for open sets such that $int\left( \overline{A}\right) =A$. The
following example shows that the closeness is important. Let $A$ be a subset
of $E^{2}\in \mathcal{W}^{2}$ defined by $A=\left\{ \left( x,y\right)
:0\prec x\prec 1,0\prec y\prec 1\right\} \cup \left\{ \left( 0,0\right)
,\left( 1,1\right) ,\left( 1,0\right) ,\left( 0,1\right) \right\} .$ $A$ is
neither closed nor open and $\left( pq\right) \subset \partial A$ or $\left(
pq\right) \subset int\left( A\right) $ for each pair of boundary points $p,q$
but $A$ is not convex.

\begin{proposition}
Let $W\in \mathcal{W}^{n}$ and let $A$ be a non-empty closed subset of $W$.
If there is a point $p\in A$ that sees $\partial A$ via $A$, then $A$ is
starshaped.
\end{proposition}

\begin{proof}
We claim that $p\in \ker A$. Suppose that $p$ is not in $\ker A$ i.e. there
is a point $q\in A$ such that $\left[ pq\right] $ is not contained in $A$.
Since $A$ is closed, there are two points $p^{\prime}$ and $q^{\prime}$ in $%
\partial A\cap \left[ pq\right] $ such that $\left(
p^{\prime}q^{\prime}\right)\cap A=\phi$. Thus $p$ does not see neither $%
p^{\prime}$ nor $q^{\prime}$. This contradicts the fact that $p$ sees $%
\partial A$ via $A$ and the proof is complete.
\end{proof}

It is clear that the converse of this result is also true. Thus we can say
that this proposition is a characterization of the kernel of the closed
starshaped sets. This means that the kernel of a closed starshaped set $A$
is only the points of $A$ that see $\partial A$. The following corollary is
direct.

\begin{corollary}
Let $W\in \mathcal{W}^{n}$ and let $A$ be a non-empty closed convex subset
of $W$. If $\partial A$ is starshaped, then $\ker \left( \partial A\right)
\subset \ker A.$
\end{corollary}

In the light of the above results, one can test the convexity and
starshapedness of a closed set $A$ using its boundary points. In the next
part a minimal subset of these boundary points will build $A$ up from inside.

\begin{theorem}
\label{th b}Let $W\in \mathcal{W}^{n}$ and let $A$ be a non-empty closed
convex subset of $W$. If $A$ has no hyperplane, then $A=C\left( \partial
A\right) $.
\end{theorem}

\begin{proof}
Since $A$ is convex, $A$ is connected. We will prove that $C\left( \partial
A\right) $ is open and closed in the relative topology on $A$ and hence $%
A=C\left( \partial A\right)$.

First, we prove that $C\left( \partial A\right) $ is open in $A$. Let $p\in
C\left( \partial A\right) \subset A$. We have the following cases:

\begin{enumerate}
\item $p\in C\left( \partial A\right) \cap int\left( A\right) $: Let $%
B_{\delta}=B\left(p,\delta \right)\cap A$. In this case there exists a real
number $\delta $ such that $B\left( p,\delta \right) \subset A$ and so $%
B_{\delta}=B\left(p,\delta \right)$. Suppose that $p$ is not an interior
point of $C(\partial A)$ i.e. for any $\delta $, $B_{\delta}$ is not
contained in $C\left( \partial A\right)$ and so $p$ is a boundary point of $%
C(\partial A)$. Therefore, there is a supporting hyperplane $H_{1}$ of $%
\overline{C(\partial A)}$(the closure of $C(\partial A)$ is a closed convex
subset of $A$) at $p$ and $\overline{C(\partial A)}$ is contained in a
closed half-space with boundary $H_{1}$. Let $x$ be any point of $B\left(
p,\delta \right) $ that lies on the other side of $H_{1}$ and let $H_{2}$ be
a parallel hyperplane to $H_{1}$ at $x$. Since $A$ does not contain a
hyperplane, we find a point $y\in H_{2}\setminus A$. The line segment $\left[
xy\right] $ cuts $\partial A$ at a point $z\in H_{2}$ which contradicts the
fact that $H_{1}$ supports $\overline{C\left(\partial A\right)}$. This
contradiction implies that $p$ is an interior point of $C\left( \partial
A\right) $ in the relative topology of $A$.

\item $p\in C\left( \partial A\right) \cap \partial A$: in this case, $%
B\left( p,\delta \right) $ has a non-empty intersection with $A$ for any
real number $\delta $. Let $B_{\delta }=B\left( p,\delta \right)\cap A$.
Suppose that $p$ is not an interior point of $C\left( \partial A\right)$.
Then, for any $\delta$, the set $B_{\delta }$ has a point $x$ which is not
in $C\left( \partial A\right)$. But $\overline{C\left( \partial A\right)} $
is closed convex set and $x\notin \overline{C\left( \partial A\right)}$, and
so we get a hyperplane $H$ passing through $x$ that separates $x$ and $%
\overline{C\left( \partial A\right)}$. Since $A$ does not have a hyperplane,
there is a point $y$ in $H\setminus A$ where $[xy]$ cuts $\partial A$. Thus $%
H$ cuts $\partial A$ and so $H$ cuts $C\left( \partial A\right) $ which is a
contradiction and so $p$ is an interior point of $C\left( \partial A\right) $
in the relative topology on $A$.
\end{enumerate}

This discussion above implies that $C\left( \partial A\right) $ is an open
set in $A$. Now, we want to prove that $C\left( \partial A\right) $ is
closed in $A$. Let $p$ be a boundary point of $C\left( \partial A\right)$.
If $p\in \partial A,$ then $p\in C\left( \partial A\right) .$ Let $p\in intA 
$, then there is a small positive real number $\delta $ such that $B\left(
p,\delta \right) \subset A.$ Since $p$ is a boundary point of $C\left(
\partial A\right) $, $B\left( p,\delta \right) \neq B\left( p,\delta \right)
\cap C\left( \partial A\right) \neq \phi $. Therefore, we find a point $x$
in $B\left( p,\delta \right) $ which is not in $C\left( \partial A\right) $.
Since $\overline{C\left( \partial A\right)} $ is a closed convex set, we get
a hyperplane $H$ passing through $x$ and does not intersect $C\left(
\partial A\right) $. But $A$ does not have a hyperplane and so $H$ cuts $%
\partial A$ which is a contradiction and $p\in C\left( \partial A\right) $
i.e. $C\left( \partial A\right) $ is closed in the relative topology on $A$
and the proof is complete.
\end{proof}

In general, sets need not have extreme points. The following proposition
gives a sufficient condition for the existence of extreme points.

\begin{proposition}
Let $W\in \mathcal{W}^{n}$ and let $A$ be a non-empty closed convex subset
of $W$. $A$ contains at least one extreme point if and only if $A$ has no
geodesic.
\end{proposition}

\begin{proof}
Let us assume that $A$ has a geodesic $l$. Suppose that $A$ has an extreme
point $p$. It is clear that $p\notin l$. Let $B$ be the closed convex hull
of $p$ and $l$. $B$ is a subset of $A$ since $A$ is a closed convex set
containing both $p$ and $l$. It is clear that $B$ contains a line passing
through $p$ and parallel to $l$ i.e. either $p$ is not an extreme point or
the line $l$ does not exist.
\end{proof}

\begin{lemma}
Let $W\in \mathcal{W}^{n}$ and let $A$ be a non-empty closed convex subset
of $W$. If $H$ is a supporting totally geodesic hypersurface of $A$, then $%
E\left( H\cap A\right) \subset E\left( A\right) $
\end{lemma}

\begin{proof}
Let $p$ be an extreme point of $H\cap A$. Suppose that $p\notin E\left(
A\right) $ i.e. there are $x,y$ in $\partial A$ such that $p\in \left(
xy\right) .$ The hypersurface $H$ supports $A$ at $p$ and so $\left[ xy%
\right] \subset H$. This implies that $p\in \left[ xy\right] \subset H\cap A$
which contradicts the fact that $p$ is an extreme point of $H\cap A$. This
contradiction completes the proof.
\end{proof}

The minimal subset of a compact convex set $A$ which generates $A$ is is its
extreme points. Our next main theorem shows that this property is more
general.

\begin{theorem}
\label{Th c}Let $W\in \mathcal{W}^{n}$ and let $A$ be a non-empty closed
convex subset of $W$. If $A$ has no hyperplane and its boundary has no ray,
then $A=C\left( E\left( A\right) \right) $.
\end{theorem}

\begin{proof}
To prove that $A=C\left( E\left( A\right) \right) $, it suffices to prove
that $\partial A\subset C\left( E\left( A\right) \right) $ and by Theorem %
\ref{th b}, we get that $A=C\left( \partial A\right) \subset C\left( E\left(
A\right) \right) \subset A$ and hence $A=C\left( E\left( A\right) \right) $.
Let $p\in \partial A$. If $p$ is an extreme point, then $p\in E\left(
A\right) \subset C\left( E\left( A\right) \right) $. Now suppose that $p$ is
not an extreme point i.e. there are $x,y$ in $\partial A$ such that $p\in
\left( xy\right) $. Since $A$ is a closed convex set, there is a supporting
totally geodesic hypersurface $H$ of $A$ at $p$. It is clear that the set $%
H\cap A$ is a non-empty closed convex subset of $\partial A$. Since $%
\partial A$ has no ray, the set $H\cap A$ is bounded i.e. $H\cap A$ is a
compact convex set. Therefore, $H\cap A=C\left( E\left( H\cap A\right)
\right) $. But $E\left( H\cap A\right) \subset E\left( A\right) $ and so $%
p\in C\left( E\left( H\cap A\right) \right) \subset C\left( E\left( A\right)
\right) $ and the proof is complete.
\end{proof}

\end{document}